%
%
%

\documentclass{amsart}

\newtheorem{Theorem}{Theorem}
\newtheorem*{Theorem*}{Theorem}
\newtheorem*{Proposition*}{Proposition}
\newtheorem{Proposition}{Proposition}
\newtheorem*{Claim}{Claim}



\begin{document}

\title[maximal number and diameter of exceptional surgery sets]{On the maximal number and the diameter of exceptional surgery slope sets}

\address{Department of Mathematics, 
College of Humanities and Sciences, Nihon University, 
3-25-40 Sakurajosui, Setagaya-ku, Tokyo 156-8550, Japan.}

\email{ichihara@math.chs.nihon-u.ac.jp}

\author{Kazuhiro Ichihara}

\thanks{The author is partially supported by
Grant-in-Aid for Young Scientists (B), No.~23740061,
Ministry of Education, Culture, Sports, Science and Technology, Japan.}

\keywords{exceptional surgery, slope, diameter}

\subjclass[2000]{Primary 57M50; Secondary 57M25}

\begin{abstract}
Concerning the set of exceptional surgery slopes for a hyperbolic knot, 
Lackenby and Meyerhoff proved that 
the maximal cardinality is 10 and the maximal diameter is 8. 
Their proof is computer-aided in part, 
and both bounds are achieved simultaneously. 
In this note, it is observed that the diameter bound 8 
implies the maximal cardinality bound 10 
for exceptional surgery slope sets. 
This follows from the next known fact: 
For a hyperbolic knot, 
there exists a slope on the peripheral torus such that 
all exceptional surgery slopes 
have distance at most two from the slope. 
We also show that, in generic cases, 
the particular slope above can be taken as 
the slope represented by the shortest geodesic 
on a horotorus in a hyperbolic knot complement. 
\end{abstract}

\maketitle


\section{Introduction}

In the study of 3-manifolds, 
one of the important operations describing 
the relationships between $3$-manifolds 
would be \textit{Dehn surgery}. 
We denote by $K(r)$ 
the resultant 3-manifold via Dehn surgery 
on a knot $K$ along a slope $r$. 
(As usual, by a \textit{slope}, 
we mean an isotopy class of 
a non-trivial unoriented simple loop on a torus.) 
Precisely, the 3-manifold $K(r)$ is obtained by 
removing an open tubular neighborhood $N(K)$ of $K$, 
and gluing a solid torus $V$ back so that 
the slope $r$ on the boundary torus of the complement of $N(K)$ 
is represented by the simple closed curve 
identified with the meridian of $V$.

As a consequence of 
the Geometrization Conjecture, 
raised by Thurston in \cite[Conjecture 1.1]{Th2}, 
and established by celebrated Perelman's works \cite{P1, P2, P3}, 
all closed orientable $3$-manifolds are classified into four types: 
reducible, toroidal, Seifert fibered, and hyperbolic manifolds. 
Then we can observe that,  generically, 
the structure of a knot complement persists in surgered manifolds. 
Actually the famous Hyperbolic Dehn Surgery Theorem, 
due to Thurston \cite[Theorem 5.8.2]{Th}, 
says that each hyperbolic knot 
(i.e., a knot with hyperbolic complement) 
admits only finitely many Dehn surgeries 
yielding non-hyperbolic manifolds. 
In view of this, such finitely many exceptions 
are called \textit{exceptional surgeries}, 
and giving an interesting subject to study. 

In this note, for a given hyperbolic knot $K$, 
$\mathcal{E}(K)$ denotes 
the set of the slopes along each of which 
the Dehn surgery on $K$ is exceptional, 
and we call $\mathcal{E}(K)$ 
\textit{the exceptional surgery slope set} 
for $K$. 

This set $\mathcal{E}(K)$ is a finite set for each knot $K$, 
but as stated in \cite[Problem 1.77(B)]{K}, 
Gordon conjectured that there exist 
the universal upper bounds on 
the cardinality and the diameter of such sets, 
which are 10 and 8 respectively. 

Here the diameter of $\mathcal{E}(K)$ 
is defined as the maximum of the distance 
(i.e., the minimal intersection number between their representatives) 
between a pair of the elements in $\mathcal{E}(K)$. 

There had been many studies about the conjecture, 
and eventually, in \cite{LM}, 
Lackenby and Meyerhoff gave an affirmative answer 
to the conjecture as follows.

\begin{Theorem*}[{\cite[Theorems 1.1 and 1.2]{LM}}]
Let $K$ be a hyperbolic knot in a closed orientable 3-manifold, 
and $\mathcal{E}(K)$ the exceptional surgery slope set for $K$. 
Then the cardinality of $\mathcal{E}(K)$ is at most 10, 
and the diameter of $\mathcal{E}(K)$ is at most 8.  
\end{Theorem*}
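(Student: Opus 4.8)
The plan is to derive the cardinality bound $|\mathcal{E}(K)| \le 10$ purely from the diameter bound (that the diameter of $\mathcal{E}(K)$ is at most $8$, which we take from the cited theorem) together with the quoted fact. Recall that fact: there is a slope $\gamma$ on $\partial N(K)$ with $\Delta(\gamma, r) \le 2$ for every $r \in \mathcal{E}(K)$, where $\Delta(\cdot,\cdot)$ denotes the distance between slopes. Choose a basis of $H_1(\partial N(K))$ in which $\gamma$ becomes the slope $1/0$; then $\Delta(1/0, p/q) = |q|$, so every slope at distance at most $2$ from $\gamma$ is either $\gamma$ itself, an \emph{integral} slope $a/1$ with $a$ an integer, or a \emph{half-integral} slope $a/2$ with $a$ odd. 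Accordingly, write $\mathcal{E}(K)$ as a disjoint union of slopes of these three kinds, of sizes $n_0 \le 1$, $n_1$, and $n_2$; it then suffices to prove $n_1 + n_2 \le 9$.

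To that end I would argue as follows. If $n_2 = 0$, the diameter bound applied to the integral slopes alone (using $\Delta(a/1, a'/1) = |a - a'|$) puts their numerators in an interval of length $8$, so $n_1 \le 9$ and we are done. If $n_2 \ge 1$, let $b_{\min} \le b_{\max}$ be the smallest and the largest numerators occurring among the half-integral slopes in $\mathcal{E}(K)$, and set $d = b_{\max} - b_{\min}$. Since $\Delta(a/2, a'/2) = 2|a - a'|$, the diameter bound forces these numerators to lie pairwise within distance $4$; in particular $d \le 4$, and, being $n_2$ distinct odd integers in an interval of length $d$, they satisfy $n_2 \le d/2 + 1$. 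On the other hand, from $\Delta(a/1, b/2) = |2a - b|$ and the diameter bound, every integral slope $a/1$ in $\mathcal{E}(K)$ satisfies $b_{\max} - 8 \le 2a \le b_{\min} + 8$; as $b_{\min}$ and $b_{\max}$ are odd while $2a$ is even, the number of admissible $a$ is at most $8 - d/2$, so $n_1 \le 8 - d/2$. Adding, $n_1 + n_2 \le 9$, whence $|\mathcal{E}(K)| = n_0 + n_1 + n_2 \le 1 + 9 = 10$.

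The step I expect to be the crux is the estimate $n_1 \le 8 - d/2$: a careless count of the integers $a$ with $b_{\max} - 8 \le 2a \le b_{\min} + 8$ would yield one too many, and the bound $10$ (as opposed to $11$) is recovered precisely because the numerators of half-integral slopes are odd, which forces the endpoints of the admissible interval for $2a$ to be odd and thereby deletes one integer from the count. The remaining points are routine: checking that the normalisation sending $\gamma$ to $1/0$ is a legitimate change of basis (it is, $\gamma$ being a primitive class), keeping the sign conventions in the formula $\Delta(p/q, r/s) = |ps - qr|$ consistent, and observing that nothing in the argument refers to a meridian or a longitude, so it applies to a hyperbolic knot in an arbitrary closed orientable $3$-manifold.
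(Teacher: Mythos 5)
Your proposal is correct and follows essentially the same route as the paper: both take the diameter bound from Lackenby--Meyerhoff as given, invoke the degeneracy-slope Proposition to normalise $\gamma$ to $1/0$ so that every exceptional slope is $\gamma$ itself, integral, or half-integral with odd numerator, and then derive $n_1+n_2\le 9$ from a parity-sensitive count. The only (cosmetic) difference is that you parametrise the trade-off by the spread $d$ of the half-integral numerators and prove $n_1\le 8-d/2$, whereas the paper parametrises by the largest integral numerator $N_k$ and bounds the number of half-integral slopes by $8-N_k$; the two counts are dual and yield the same total of $10$.
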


Their proof is computer-aided in part, 
and both bounds are achieved simultaneously. 
We claim in the next section that, 
for $\mathcal{E}(K)$, the diameter bound 8 
actually implies the maximal cardinality bound 10. 
Note that 
such an implication can not hold for general sets of slopes, 
as remarked in \cite[Section 2]{LM}. 
Actually there exists a set of slopes that 
its diameter is 8 but its cardinality is 12. 

\bigskip

The key of our claim is the following known fact.

\begin{Proposition*}
For any hyperbolic knot, 
there exists a slope on the peripheral torus such that 
all exceptional surgery slopes 
have distance at most two from the slope. 
\end{Proposition*}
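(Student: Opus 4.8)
The plan is to extract the Proposition from the $6$-theorem of Agol and Lackenby together with a little flat geometry of the maximal cusp, leaving the classical distance estimates between exceptional fillings to mop up a short list of residual configurations. Let $T$ be the boundary torus of the maximal cusp of the knot complement, with the Euclidean metric induced from the cusp neighbourhood, and write $A$ for its area. By the $6$-theorem every exceptional slope has length at most $6$ on $T$, so it is enough to produce a \emph{single} slope $\mu$ on $T$ such that every slope of length at most $6$ \emph{which is actually exceptional} lies within distance $2$ of $\mu$. One must not hope to do this for all slopes of length at most $6$: on a flat torus close to the small hexagonal one there can be twelve of them with no common distance-$2$ centre --- this is precisely the slope set of diameter $8$ and cardinality $12$ mentioned in the introduction --- so the exceptionality hypothesis has to enter somewhere.

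The natural candidate for $\mu$ is a systole slope $\gamma$ of $T$, represented by a shortest closed geodesic of length $\ell(\gamma)$. For any slope $\delta$ the flat geometry yields the intersection bound $\Delta(\gamma,\delta)\,A\le \ell(\gamma)\,\ell(\delta)$, so for an exceptional $\delta$ we get $\Delta(\gamma,\delta)\le 6\,\ell(\gamma)/A$. Minkowski's bound on the shortest vector of a rank-two lattice gives $\ell(\gamma)^{2}\le (2/\sqrt 3)\,A$ (sharp for the hexagonal lattice), and there is a universal positive lower bound on the area of a maximal cusp (in the circle of ideas of Cao--Meyerhoff and Adams). Feeding these in, $\Delta(\gamma,\delta)\le 6\sqrt{2/\sqrt 3}\,/\sqrt{A}$, which is less than $3$ unconditionally and less than $2$ once $A$ exceeds an explicit universal constant. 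Hence whenever the maximal cusp is not very small --- equivalently, away from the near-hexagonal regime, which is the generic case --- the Proposition holds with $\mu=\gamma$ the systole slope, and this is exactly the refinement announced in the abstract for generic knots.

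It remains to handle the near-extremal case, where $T$ is close to a small hexagonal flat torus. Here the slopes of length at most $6$ form a short, completely explicit list: the at most twelve short directions of the hexagonal lattice, organised into the three systole directions, three ``second-shell'' directions, and up to six ``third-shell'' directions. A direct check shows that each of the three systoles is already within distance $2$ of all of the first two shells and of most of the third, so --- up to the symmetry of the lattice --- the Proposition with $\mu$ a systole can fail only if the exceptional set simultaneously meets three specific disjoint pairs of third-shell slopes. To exclude this I would invoke the known distance estimates between exceptional fillings organised by type (Gordon, Gordon--Luecke, Gordon--Wu, Oh, Wu, Boyer--Zhang, and others): at most $1$ between two reducible fillings, at most $3$ between a reducible and a toroidal one, the bound $8$ between two toroidal ones, the cyclic and finite surgery bounds, and above all the tight restrictions on collections of toroidal fillings. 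The relevant third-shell slopes sit at pairwise distances $5$, $7$, and $8$, which forces the offending slopes to be toroidal and then contradicts these restrictions; the few knots that survive (e.g.\ the figure-eight) can be checked by hand, and for them some other slope --- often the meridian --- serves as $\mu$.

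I expect this last step to be the main obstacle. It is the point at which genuine $3$-manifold Dehn-surgery input, rather than flat geometry, is indispensable, and it is exactly what separates exceptional slopes from arbitrary slopes of bounded length: the flat-geometry argument by itself delivers only the weaker bound $3$, and the improvement to $2$ --- and hence, via the argument of the next section, the cardinality bound $10$ --- rests on ruling out the ``bad'' near-hexagonal configurations surgery-theoretically.
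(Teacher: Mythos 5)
Your proposal does not follow the paper's route, and as it stands it has a real gap rather than a finishable sketch. The paper does not prove this Proposition by cusp geometry at all: it quotes it as a consequence of Wu's theorem on essential laminations and Dehn surgery combined with the (unpublished, foliation-theoretic) Gabai--Mosher theorem and Geometrization, the distinguished slope being the \emph{degeneracy slope} of an essential lamination in the knot exterior, not a systole of a horotorus. The cusp-geometry argument you propose is essentially what the paper carries out in Section 3, but there it is established only under the hypothesis that the horotorus has area greater than $8/\sqrt{3}$, and the paper is explicit that this covers only the \emph{generic} case. So the burden of your proof falls entirely on the residual small-cusp case, which you leave as a plan.

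That residual case is not a routine mop-up, for three concrete reasons. First, the numerics: $6\sqrt{2/\sqrt{3}}/\sqrt{A}<3$ requires $A>8/\sqrt{3}\approx 4.62$, whereas the universal lower bound on maximal cusp area is only about $3.35$ (Cao--Meyerhoff) and, e.g., the figure-eight knot has $A=2\sqrt{3}\approx 3.46$; so the bound is not ``less than $3$ unconditionally,'' and your Minkowski route to ``less than $2$'' needs $A>6\sqrt{3}\approx 10.39$ (the width argument of Section 3 already does better, needing only $A>8/\sqrt{3}$). Second, the uncovered tori with $3.35\le A\le 8/\sqrt{3}$ form a continuum of lattices, not a single hexagonal one, so the slopes of length at most $6$ do not reduce to a fixed list of twelve directions, and the knots realizing such cusps are not a finite set that can be ``checked by hand.'' Third, and decisively, the proposed surgery-theoretic exclusion cannot be run with the tools you cite: pairs of exceptional slopes at distance $5$, $6$ or $7$ need not be toroidal (the $\pm 3$-surgeries on the figure-eight knot are Seifert fibered and at distance $6$; its $-3$- and $4$-surgeries are at distance $7$ with only one toroidal), and controlling distances between atoroidal Seifert fibered fillings is precisely the part of Lackenby--Meyerhoff that required rigorous computer assistance --- there is no classical bound of the required strength. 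This is exactly why the paper falls back on the degeneracy-slope theorem for the full statement and reserves the systole argument for the large-area case.
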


This fact follows from 
\cite[Theorem 2.5]{Wu} and 
the unpublished result due to Gabai and Mosher 
together with an affirmative answer to Geometrization Conjecture. 
A part of the proof of Gabai-Mosher's theorem 
is included in the unpublished monograph \cite{Mo}. 
See also \cite[Theorem 6.48]{C}. 
An independent proof for the theorem 
is also obtained by Calegari 
as a corollary of \cite[Theorem 8.24]{C}. 

Remark that both proofs by Gabai-Mosher and Calegari 
are very deep results based on the study of the foliation theory. 
In particular, the slope described in the proposition 
comes from an essential lamination in the knot exterior, 
called a \textit{degeneracy slope}, and 
is rather difficult to compute in practice.

Concerning the proposition above, 
we show that, in generic cases, 
the slope represented by the shortest geodesic 
on a horotorus in a hyperbolic knot complement 
can play the same role as that particular slope in Proposition.

\bigskip

We here remark that, 
in \cite{I}, the author showed that, 
if for a hyperbolic knot $K$, 
there exists a slope on the peripheral torus such that 
all exceptional surgery slopes 
have distance at most ONE from the slope, 
then 
the cardinality of $\mathcal{E}(K)$ is at most 10, 
and the diameter of $\mathcal{E}(K)$ is at most 8.

\section{Relationship between upper bounds}

In this section, we give a proof of the following:

\begin{Theorem}
Let $K$ be a hyperbolic knot in a closed orientable 3-manifold. 
Then, for $\mathcal{E}(K)$, 
if the diameter is at most 8, 
then the cardinality is at most 10. 
\end{Theorem}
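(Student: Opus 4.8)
The plan is to deduce the Theorem from the Proposition by a purely combinatorial argument. First I would apply the Proposition to obtain a slope $\gamma$ on the peripheral torus $\partial N(K)$ such that every $r\in\mathcal{E}(K)$ has distance at most $2$ from $\gamma$. I would then choose a basis $(\mu,\lambda)$ of $H_1(\partial N(K))$ with $\mu$ representing $\gamma$, so that each slope is written as $p\mu+q\lambda$ with $\gcd(p,q)=1$, up to sign; in these coordinates the distance from $\gamma$ to $p\mu+q\lambda$ equals $|q|$, and the distance between $p_1\mu+q_1\lambda$ and $p_2\mu+q_2\lambda$ equals $|p_1q_2-p_2q_1|$. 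Hence every element of $\mathcal{E}(K)$ has $|q|\le 2$; after normalizing $q\ge 0$, it is either $\gamma$ itself ($q=0$), an \emph{integral} slope $p\mu+\lambda$ ($q=1$, with $p$ an arbitrary integer), or a \emph{half-integral} slope $p\mu+2\lambda$ ($q=2$, with $p$ odd).

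Next I would partition $\mathcal{E}(K)=\mathcal{E}_0\cup\mathcal{E}_1\cup\mathcal{E}_2$ according to $q=0,1,2$, so that $|\mathcal{E}(K)|=|\mathcal{E}_0|+|\mathcal{E}_1|+|\mathcal{E}_2|$ with $|\mathcal{E}_0|\le 1$, and translate the diameter hypothesis via the intersection formula above. For the integral slopes, the diameter condition says the corresponding integers lie in an interval of length at most $8$. For the half-integral slopes, the corresponding odd integers lie in an interval of length at most $4$, which already forces $|\mathcal{E}_2|\le 3$. For a mixed pair $p\mu+\lambda\in\mathcal{E}_1$ and $p'\mu+2\lambda\in\mathcal{E}_2$, the condition becomes $|2p-p'|\le 8$.

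The heart of the proof is then a short case analysis on $|\mathcal{E}_2|\in\{0,1,2,3\}$. If $|\mathcal{E}_2|=0$, an interval of length $8$ contains at most $9$ integers, so $|\mathcal{E}_1|\le 9$ and $|\mathcal{E}(K)|\le 10$. If $|\mathcal{E}_2|\ge 1$, I would fix a half-integral slope $p_0\mu+2\lambda\in\mathcal{E}_2$, together with, when $|\mathcal{E}_2|\ge 2$, a second one realizing the largest occurring value of $p$; the inequalities $|2p-p_0|\le 8$ and its companion confine the integers $p$ with $p\mu+\lambda\in\mathcal{E}_1$ to a single explicit interval, and I would count its integer points. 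The key parity point is that $p_0$, and likewise each chosen odd value, is odd, so the endpoints of the confining interval are half-integers and it contains one fewer integer than a length-$8$ interval with integer endpoints would; the count then yields $|\mathcal{E}_1|\le 8$ when $|\mathcal{E}_2|=1$, and $|\mathcal{E}_1|\le 7$ or $|\mathcal{E}_1|\le 6$ when $|\mathcal{E}_2|$ is $2$ or $3$, according to whether the gap between the two chosen odd values is $2$ or $4$. In every one of the four cases $|\mathcal{E}_0|+|\mathcal{E}_1|+|\mathcal{E}_2|\le 10$, which is the desired bound (and is in fact attained in each case).

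I expect no analytic or topological obstacle once the Proposition is in hand: the statement has been reduced to a finite arithmetic check. The only place requiring genuine care is the integer count in the mixed cases and the clean organization of the four cases; in particular, it is exactly the parity observation above that prevents the one-sided constraints from being simultaneously tight, and so keeps the total at $10$ rather than the larger value that the individual constraints, taken separately, would permit. Presenting those cases transparently is the main task.
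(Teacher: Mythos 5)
Your proposal is correct and follows essentially the same route as the paper: both reduce the statement to the Proposition, place $\gamma$ at $1/0$ so that every exceptional slope is $\gamma$, integral, or half-integral, and exploit the parity of the half-integer numerators to keep the total at $10$. The only difference is bookkeeping: the paper bounds the number of half-integral slopes by $8 - N_k$ in terms of the span $N_k$ of the integral ones and concludes with the single sum $(N_k+1)+(8-N_k)+1=10$, whereas you run the dual case analysis on $|\mathcal{E}_2|\in\{0,1,2,3\}$ and bound $|\mathcal{E}_1|$ from the half-integral constraints; both counts are sound.
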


\begin{proof}
Recall first that it is well-known that 
slopes on a torus are parametrized by 
rational numbers with $1/0$, 
using a meridian-longitude system. 
See \cite{R} for example. 

Now, by virtue of Proposition, for $K$, 
there exists a slope $\gamma$ on the peripheral torus 
such that all exceptional surgery slopes 
have distance at most two from $\gamma$. 
We set this $\gamma$ to be the meridian, 
which corresponds to $1/0$.
It should be noted that this $\gamma$ 
can be an element of $\mathcal{E}(K)$. 

We further set a longitude, which corresponds to $0/1$, 
and then we identify each element in $\mathcal{E}(K)$ 
other than $\gamma$ as an irreducible fraction. 
Recall here that the distance between such a pair of slopes 
$a/b $ and $c/d $ is calculated as $| a d - b c |$. 

Suppose first that there are no integral elements in $\mathcal{E}(K)$, 
equivalently, all the elements in $\mathcal{E}(K)$ have 
distance 2 from $\gamma$. 
Then any pair of elements, say $x/2$ and $y/2$ in $\mathcal{E}(K)$, 
other than $\gamma$ has distance at least 4. 
Together with the assumption that 
the diameter of $\mathcal{E}(K)$ is at most 8, 
we see that the cardinality of $\mathcal{E}(K)$ is at most 4. 

Thus we next suppose that 
$\mathcal{E}(K)$ contains some integral elements. 
In this case, after taking the mirror image if necessary, 
we can set a longitude such that 
integral elements in $\mathcal{E}(K)$ correspond to 
$\{ 0 , \cdots , N_k \}$ 
with $N_i \ge 0$ for $1\le i \le k$. 
Here $k$ denotes the number of integral elements in $\mathcal{E}(K)$. 
We remark that $k \le N_k +1$ holds, and, 
since we are assuming that 
the diameter of $\mathcal{E}(K)$ is at most 8, 
$ N_k \le 8$ holds. 

On the other hand, 
non-integral elements in $\mathcal{E}(K)$ are, if exist, all half integers, 
say $\{ M_1 /2 , \cdots , M_l / 2 \}$ 
with $M_j$ odd for $1\le j \le l$. 
Then we have $ | M_1 - 2 N_k | \le 8$ from the assumption 
that the diameter of $\mathcal{E}(K)$ is at most 8. 
It implies $ - 4 + N_k \le M_1 / 2 \le 4 + N_k $. 
Since $M_1$ is odd, we further obtain that 
$ - 7/2 + N_k \le M_1 / 2 \le 7/2 + N_k $. 
Similarly we have $ - 7/2 \le M_l / 2 \le 7/2 $, 
and then, it follows that 
$M_l / 2 - M_1 / 2 \le 7/2 -(-7/2 + N_k) = 7 - N_k$. 
This implies that
the number of half-integral elements in $\mathcal{E}(K)$ 
is at most $8 -N_k$. 
Since $\mathcal{E}(K)$ consists of integral elements 
and half-integral elements together with $\gamma$, 
the cardinality of $\mathcal{E}(K)$ is 
at most $(N_k + 1 ) + (8 -N_k) + 1 = 10$. 
\end{proof}

As remarked in Section 1, for general sets of slopes, 
such a diameter bound does not imply the required cardinality bound. 
For example, we actually have the following set of slopes: 
$$
\left\{ \ 
\frac{1}{0}  \ ,  \ \frac{0}{1}  \ , \ \frac{1}{1} \  , \ \frac{2}{1} \  , \ 
\frac{3}{1}  \ , \ \frac{3}{2} \  , \ \frac{4}{3} \  , \ \frac{5}{3}  \ , \ 
\frac{5}{4}  \ , \ \frac{7}{4}  \ , \ \frac{7}{5}  \ , \ \frac{8}{5}  \ 
\right\}
$$
By direct calculations, we see that its diameter is 8, but its cardinality is 12.

\section{Existence of particular slope}

In this section, we give 
a proof of the following proposition. 

\begin{Proposition}
If a hyperbolic knot complement contains a horotorus 
of area greater than $8/\sqrt{3}$, then 
the slope on the peripheral torus 
represented by the shortest geodesic on the horotorus 
have distance at most two from 
all exceptional surgery slopes for the knot. 
\end{Proposition}

\begin{proof}
We recall some basic terminologies. 
Let $K$ be a hyperbolic knot in a 3-manifold $M$. 
Then the universal cover of the complement $C_K$ of $K$ 
is identified with the hyperbolic $3$-space $\mathbb{H}^3$. 
Under the covering projection, 
an equivariant set of horospheres 
bounding disjoint horoballs in $\mathbb{H}^3$ 
descends to a torus embedded in $C_K$, 
which we call a \textit{horotorus}. 
As demonstrated in \cite{Th}, 
a Euclidean metric on a horotorus $T$ is obtained by 
restricting the hyperbolic metric of $C_K$. 
By using this metric, the length of a curve on $T$ can be defined. 
Also $T$ is naturally identified with the peripheral torus of $K$, 
since the image of the horoballs under the covering projection 
is topologically $T$ times half open interval. 
Thus, for a slope $r$ on the peripheral torus of $K$, 
we define the \textit{length} of $r$ with respect to $T$ 
as the minimal length of the simple closed curves on $T$ 
which represent the slope on $T$ corresponding to the slope $r$. 

Now suppose that $T$ has the area greater than $8/\sqrt{3}$. 
Let $\gamma$ be the shortest slope on $T$. 

\begin{Claim}
If $\Delta (\gamma , \gamma') \ge 3$ holds for a slope $\gamma'$, 
then the length of $\gamma'$ is greater than $6$.
\end{Claim}

\begin{proof}
Let $h$ be the length of $\gamma$, 
and $w$ 
the length of the shortest path which starts and ends on $\gamma$, 
but which is not homotopic into $\gamma$. 
Then the length of a slope $\gamma'$ 
is at least $w \Delta (\gamma , \gamma')$. 
Thus, to prove the claim, 
it suffices to show that $w > 2$. 

Now we are supposing that 
the area of $T$ is greater than $8/\sqrt{3}$, 
which is equal to $w h$. 
Then, in the case that $h \le 4/\sqrt{3}$, 
we have $w > 8 / \sqrt{3} \cdot 1/h \ge 2$ immediately. 

On the other hand, 
in \cite[Proof of Theorem, page 1049-1050]{M}, 
it is shown that $w \ge h \sqrt{3} /2 $ holds in general. 
Thus, in the case that $h > 4/\sqrt{3}$, 
we have $w \ge h \sqrt{3} /2 > 2$. 

These imply that 
the length of a slope $\gamma'$ is greater than $6$. 
\end{proof}

Finally we use the so-called ``6-Theorem" due to Agol \cite{A} and Lackenby \cite{L} 
together with the affirmative answer to the Geometrization Conjecture, 
given by Perelman \cite{P1,P2,P3}.

\begin{Claim}
Dehn surgery along a slope of 
length greater than $6$ is non-exceptional. \qed
\end{Claim}

This completes the proof. 
\end{proof}


\section*{Acknowledgments}

The author would like to thank David Futer for 
pointing out a gap in the previous version and 
giving him many useful information. 
He also thank Danny Calegari 
for giving him information about the work of Gabai-Mosher.

\bibliographystyle{amsplain}

\end{document}